\title{On the Role of Cylindrical Functions in Kantorovich duality}
\author{Martijn Zaal}
\date{\today}
\address[Martijn Zaal]{Institut f\" ur Angewandte Mathematik, University of Bonn \newline \indent Endenicher Allee 60, 53115 Bonn, Germany}
\email{mzaal@iam.uni-bonn.de}
\urladdr{http://www.iam.uni-bonn.de/users/zaal/}
\date{\today}
\subjclass{46G12, 49N15}
\keywords{optimal transport, Kantorovich duality}
\DeclareMathOperator{\Cyl}{\mathrm{Cyl}}
\newcommand{\Naturals}{\mathds{N}}
\newcommand{\Reals}{\mathds{R}}
\newcommand{\set}[1]{\{#1\}}
\newcommand{\Set}[1]{\left\{#1\right\}}
\newcommand*{\ud}[1]{\,\mathrm{d}#1}
\newtheorem{theorem}{Theorem}[section]
\newtheorem{lemma}[theorem]{Lemma}
\newtheorem{proposition}[theorem]{Proposition}
\theoremstyle{remark}
\newtheorem{assumption}[theorem]{Assumption}
\newtheorem*{remark}{Remark}
\numberwithin{equation}{section}
\begin{document}

\begin{abstract}
    We study the dual formulation of the Monge-Kantorovich optimal transportation problem, in particular under what circumstances it is permitted in an infinite dimensional setting to use cylindrical functions, i.e. functions of the form $\varphi\circ P$ where $P$ is a finite-rank operator and $\varphi$ is a smooth, compactly supported function. 
    In the last section, some examples of applications are presented.
\keywords{optimal transport, Kantorovich duality, approximation property}
\subjclass{46G12, 49N15}
\end{abstract}

\maketitle

%%%%%%%% INTRODUCTION %%%%%%%%

\section{Introduction}
\label{s:introduction}

Even though optimal transportation problem, first studied by Monge in the paper \emph{M\'emoire sur la th\'eorie des d\'eblais et de remblais} in 1781 \cite{Monge-1781}, is easy to state, it gives rise to rich theory and has proven useful in many applications. 
Only in 1942, Kantorovich provided an answer to the existence question by studying a relaxed version of the problem \cite{Kantorovich-1942}. 
In present time, the Monge-Kantorovich problem is understood to be the search for a probability measure $\gamma$ on the product $X\times Y$ with prescribed marginals $\mu$ and $\nu$ on $X$ and $Y$ respectively, minimizing the cost
\begin{equation}\label{eq:Monge-Kantorovich}
    \int_{X\times Y}c(x, y)\ud{\gamma(x, y)}.
\end{equation}
Since the late 1980's, the Monge-Kantorovich optimal transport problem has received renewed attention and proved to be useful in many fields of mathematics including dynamical systems, differential geometry and probability theory.

For a survey of the Monge-Kantorovich problem and its applications, the reader is referred to, for example, \cite{Bogachev-Kolesnikov-2012}, \cite{Villani-Topics} or \cite{Villani-OldNew}.

Also introduced by Kantorovich is a problem dual to the minimization of \eqref{eq:Monge-Kantorovich}: the maximization of 
\begin{equation}\label{eq:Kantorovich}
    \int_X\phi\ud{\mu}+\int_Y\psi\ud{\nu}
\end{equation}
among all pairs $(\phi, \psi)$ of integrable functions such that $\phi(x)+\psi(y)\le c(x, y)$ everywhere. 
Being closely related to the Monge-Kantorovich optimal transport problem, the dual problem serves as a valuable tool for analyzing problems and questions related to the Monge-Kantorovich optimal transport problem.

The class of functions over which \eqref{eq:Kantorovich} is to be maximized can be reduced considerably (see also \cite[Section 1.1]{Villani-Topics}): the supremum does not change when $\phi$ and $\psi$ are required to be continuous in addition. 
In case the spaces $X$ and $Y$ are Polish, that is, complete separable metric spaces, $\phi$ and $\psi$ may also be assumed to be bounded.
Finally, if $X$ and $Y$ are locally compact, $\phi$ and $\psi$ may be required to decay to zero.
One should note, however, that a maximizer of \eqref{eq:Kantorovich}, if it exists, may not possess this regularity.

We will focus on the case where $X$ and $Y$ are vector spaces. 
The case $X=Y=\Reals^n$ with a cost function of the form $h(x-y)$ seems to be well-studied (see, for instance \cite{Gangbo-McCann-1996}). 
In case $X$ or $Y$ is infinite dimensional, much less seems to be known. 
The Monge-Kantorovich problem when $X=Y$ is an abstract Wiener with a specific cost has been studied by Feyel and \"Ust\"unel \cite{Feyel-Ustunel-2003}, but this reference does not explore the structure of the dual problem. 
To the author's knowledge, no results but the general duality results discussed above are known for infinite dimensional vector spaces.
Here, we will show that, under certain circumstances, the class of functions used for the Kantorovich dual problem can be limited to cylindrical functions, more precisely functions of the form $\varphi\circ P$, where $P$ is a finite rank operator and $\varphi$ is a smooth function with compact support.

The organization of this paper is as follows: in Section \ref{s:preliminaries}, some preliminary results concerning duality in Polish spaces and Euclidean space are recalled.
The main results are stated and proven in Section \ref{s:cylinder}.
Finally, in Section \ref{s:examples}, some examples are presented.

%%%%%%%% PRELIMINARIES %%%%%%%%

\section{Preliminaries}
\label{s:preliminaries}

\subsection{General duality}

In this section, we will briefly consider the general Kantorovich duality theorem. 
Here and in what follows, we assume that $X$ and $Y$ are Polish spaces, and $c$ is a nonnegative, lower semicontinuous function. 
The value $+\infty$ is permitted for $c$.

\begin{theorem}[Kantorovich duality]\label{th:kantorovich}
    Define the set $\Phi_c$ to be the set of pairs $(\phi, \psi)\in L^1(\mu)\times L^1(\nu)$ such that 
    \begin{equation}\label{eq:constraint}
        \phi(x)+\psi(y)\le c(x, y).
    \end{equation}
    for $\mu$-almost every $x$ and $\nu$-almost every $y$. 
    Then
    \begin{equation}\label{eq:duality}
        \mathscr{C}_c(\mu, \nu)=\sup_{(\phi, \psi)\in\Phi_c}\Set{\int_X\phi\ud{\mu}+\int_Y\psi\ud{\nu}}.
    \end{equation}
    Moreover, in \eqref{eq:duality}, the supremum may be taken over $\Phi_c\cap\left(C^0_b(X)\times C^0_b(Y)\right)$.
\end{theorem}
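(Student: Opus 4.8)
The plan is to prove that the optimal transport cost is bounded above by a supremum taken over \emph{bounded continuous potentials satisfying the constraint everywhere}, and then to close the circle with weak duality. Write $\Pi(\mu,\nu)$ for the set of probability measures on $X\times Y$ with marginals $\mu$ and $\nu$, so that $\mathscr{C}_c(\mu,\nu)=\inf_{\gamma\in\Pi(\mu,\nu)}\int_{X\times Y}c\ud\gamma$, write $(\phi\oplus\psi)(x,y):=\phi(x)+\psi(y)$, and set
\begin{equation*}
    D:=\sup\Set{\int_X\phi\ud\mu+\int_Y\psi\ud\nu : \phi\in C^0_b(X),\ \psi\in C^0_b(Y),\ \phi\oplus\psi\le c\text{ everywhere}}.
\end{equation*}
On one hand, for any $(\phi,\psi)\in\Phi_c$ the set $\set{\phi\oplus\psi>c}$ lies in $(N\times Y)\cup(X\times M)$ with $\mu(N)=\nu(M)=0$, hence is $\gamma$-null for every $\gamma\in\Pi(\mu,\nu)$, so $\int_X\phi\ud\mu+\int_Y\psi\ud\nu=\int(\phi\oplus\psi)\ud\gamma\le\int c\ud\gamma$; taking the infimum over $\gamma$ and then the supremum over $\Phi_c$ gives the \emph{weak duality} bound $\sup_{\Phi_c}\Set{\cdots}\le\mathscr{C}_c(\mu,\nu)$. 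On the other hand, trivially $D\le\sup_{\Phi_c\cap(C^0_b\times C^0_b)}\Set{\cdots}\le\sup_{\Phi_c}\Set{\cdots}$. Hence, once I show $\mathscr{C}_c(\mu,\nu)\le D$, all four quantities coincide and both assertions of the theorem follow.

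For $\mathscr{C}_c(\mu,\nu)\le D$ I would use the Fenchel--Rockafellar duality theorem on $E=C^0_b(X\times Y)$. Define convex functionals $\Theta,\Xi\colon E\to(-\infty,+\infty]$ by $\Theta(u)=0$ if $u\ge-c$ everywhere and $\Theta(u)=+\infty$ otherwise, and $\Xi(u)=\int_X\phi\ud\mu+\int_Y\psi\ud\nu$ if $u=\phi\oplus\psi$ for some $\phi\in C^0_b(X)$, $\psi\in C^0_b(Y)$ and $\Xi(u)=+\infty$ otherwise ($\Xi$ is well defined because $\mu$ and $\nu$ are probability measures). Since $c\ge0$, the constant function $1$ lies in the interior of $\set{\Theta=0}$ and has $\Xi(1)<\infty$, so the qualification hypothesis holds and
\begin{equation*}
    \inf_{u\in E}\bigl(\Theta(u)+\Xi(u)\bigr)=\max_{\pi\in E^*}\bigl(-\Theta^*(-\pi)-\Xi^*(\pi)\bigr).
\end{equation*}
After the substitution $(\phi,\psi)\mapsto(-\phi,-\psi)$ the left-hand side equals $-D$. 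On the right, a direct computation of the Legendre transforms gives $\Xi^*(\pi)=0$ when $\pi$ has marginals $\mu$ and $\nu$ and $+\infty$ otherwise, and $\Theta^*(-\pi)=\int c\ud\pi$ when $\pi\ge0$ (here the lower semicontinuity of $c$ and monotone convergence are used) and $+\infty$ otherwise, so the right-hand side equals $-\inf_{\pi\in\Pi(\mu,\nu)}\int c\ud\pi=-\mathscr{C}_c(\mu,\nu)$. Equating the two sides yields $D=\mathscr{C}_c(\mu,\nu)$, and in particular $\mathscr{C}_c(\mu,\nu)\le D$.

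The main obstacle is that the identification of the relevant part of $E^*=\bigl(C^0_b(X\times Y)\bigr)^*$ with nonnegative Borel measures carrying the prescribed marginals is only transparent when $X$ and $Y$ are \emph{compact}, where $C^0_b=C^0$ and the Riesz representation theorem applies; for general Polish spaces the dual of $C^0_b$ is strictly larger. I would therefore first run the Fenchel--Rockafellar argument above for compact $X,Y$ and continuous $c$, then relax to lower semicontinuous $c$ by writing $c$ as the pointwise supremum of an increasing sequence of bounded continuous functions and passing to the limit by monotone convergence, and finally pass from the compact to the general Polish case by tightness: $\mu$ and $\nu$ are tight (Ulam's theorem), hence so is $\Pi(\mu,\nu)$, so $\mathscr{C}_c(\mu,\nu)$ is approximated by the transport costs between the renormalised restrictions of $\mu$ and $\nu$ to large compact subsets of $X$ and $Y$; the potentials furnished by the compact case are then extended to bounded continuous functions on all of $X$ and $Y$ (for instance by the Tietze--Dugundji extension theorem, followed by a truncation) to produce competitors for $D$ whose value is arbitrarily close to $\mathscr{C}_c(\mu,\nu)$. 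Controlling the error terms in this last limiting step — in particular checking that the extended potentials can be corrected by a small constant so as still to satisfy $\phi\oplus\psi\le c$ everywhere — is the delicate point; the remainder is bookkeeping. One may alternatively bypass the compactification by applying a minimax theorem directly on the weak-$*$ compact convex set $\Pi(\mu,\nu)$, but the tightness input is required in either approach.
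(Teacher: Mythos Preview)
The paper does not actually prove this theorem: immediately after the statement it simply refers to \cite[Theorem 1.3]{Villani-Topics} and adds the remark that, inspecting that proof, the constraint \eqref{eq:constraint} may in fact be taken to hold everywhere. Your proposal is precisely the argument found in that reference --- Fenchel--Rockafellar duality on $C^0_b(X\times Y)$, carried out first for compact $X,Y$ and bounded continuous $c$ where the dual is identified with measures via Riesz, then extended to lower semicontinuous $c$ by monotone approximation from below, and finally to general Polish spaces via Ulam tightness and extension of the potentials off the large compact sets. So you have reconstructed what the paper merely cites; the sketch is correct, and the passage you flag as delicate (adjusting the extended potentials by a small constant so that $\phi\oplus\psi\le c$ holds everywhere) is indeed where the work in Villani's proof is concentrated.
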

See, for instance, \cite[Theorem 1.3]{Villani-Topics}.
Actually, going through the proof of this theorem, it follows that the supremum can be limited to those $(\phi, \psi)$ that satisfy \eqref{eq:constraint} \emph{everywhere}.

By some carefully chosen modifications of an admissible pair $(\phi, \psi)$, that is, a pair satisfying \eqref{eq:constraint}, the supremum in \eqref{eq:duality} can be computed over a smaller class of functions.
More precisely, a number of additional bounds may be assumed for $\phi$ and $\psi$.

\begin{lemma}\label{th:kantorovich-bounded}
    Let $\Phi_c^*$ be the set of all pairs $(\phi, \psi)\in C^0_b(X)\times C^0_b(Y)$ such that 
    \begin{gather}
        \inf_X\phi=0, \qquad
        \sup_X\phi\le\sup_{X\times Y}c, \qquad
        \psi\ge\inf_{x\in X, y\in Y}\Set{c(x, y)-\phi(x)}, \qquad
        \sup_Y\psi\ge 0, \label{eq:potential-bounds}\\
        \inf_{x\in X, y\in Y}\Set{c(x, y)-\phi(x)-\psi(y)}=0\label{eq:sharp-constraint}.
    \end{gather}
    Then 
     \begin{equation}
        \mathscr{C}_c(\mu, \nu)=\sup_{(\phi, \psi)\in\Phi_c^*}\left\{\int_X\phi\,\mathrm{d}{\mu}+\int_Y\psi\,\mathrm{d}{\nu}\right\}
    \end{equation}
\end{lemma}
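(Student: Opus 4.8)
The plan is to keep the easy inequality trivial and to obtain the reverse one by massaging an admissible pair. Since every $(\phi,\psi)\in\Phi_c^*$ lies in $\Phi_c$ — bounded continuous functions are integrable against the probability measures $\mu,\nu$, and \eqref{eq:constraint} holds everywhere — testing the constraint against an arbitrary coupling gives $\int_X\phi\,\mathrm{d}\mu+\int_Y\psi\,\mathrm{d}\nu\le\mathscr{C}_c(\mu,\nu)$, so $\mathscr{C}_c(\mu,\nu)$ bounds the supremum from above. For the reverse inequality, Theorem~\ref{th:kantorovich} and the remark after it reduce matters to the following: given $(\phi,\psi)\in C^0_b(X)\times C^0_b(Y)$ with $\phi(x)+\psi(y)\le c(x,y)$ everywhere, produce a pair in $\Phi_c^*$ whose integral is at least $\int_X\phi\,\mathrm{d}\mu+\int_Y\psi\,\mathrm{d}\nu$. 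Throughout I assume $c\not\equiv+\infty$, the opposite case being degenerate.

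The engine is the $c$-transform, $\psi^c(x):=\inf_y\{c(x,y)-\psi(y)\}$ and $\phi^c(y):=\inf_x\{c(x,y)-\phi(x)\}$. Admissibility forces $\phi\le\psi^c$ and $\psi\le\phi^c$, so passing to $(\psi^c,(\psi^c)^c)$ does not lower the integral and produces a $c$-conjugate pair, $\phi=\psi^c$, $\psi=\phi^c$. For such a pair $\inf_{x,y}\{c(x,y)-\phi(x)-\psi(y)\}=\inf_y\{\phi^c(y)-\psi(y)\}=0$, so \eqref{eq:sharp-constraint} holds automatically; the shift $(\phi,\psi)\mapsto(\phi-\inf_X\phi,\,\psi+\inf_X\phi)$ leaves the integral unchanged (probability marginals), preserves $c$-conjugacy, and gives $\inf_X\phi=0$. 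The three inequalities in \eqref{eq:potential-bounds} then come for free: for $\eta>0$ pick $x$ with $\phi(x)<\eta$; since $\phi(x)=\psi^c(x)=\inf_y\{c(x,y)-\psi(y)\}$ there is $y$ with $c(x,y)-\psi(y)<\eta$, whence $\psi(y)>c(x,y)-\eta\ge-\eta$ and so $\sup_Y\psi\ge0$; from $\phi(x)=\psi^c(x)\le c(x,y_0)-\psi(y_0)$ with $\psi(y_0)$ near $\sup_Y\psi\ge0$ one reads off $\sup_X\phi\le\sup_{X\times Y}c-\sup_Y\psi\le\sup_{X\times Y}c$; and $\psi=\phi^c$ gives $\psi(y)=\inf_x\{c(x,y)-\phi(x)\}\ge\inf_{x,y}\{c(x,y)-\phi(x)\}$.

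The one genuine obstacle is that this breaks continuity: when $c$ is merely lower semicontinuous, $\psi^c$ is an infimum of functions only lower semicontinuous in $x$, hence need not lie in $C^0_b(X)$. I would deal with this by approximation. Set $c_k(x,y):=\min\bigl\{k,\inf_{(x',y')}\bigl[c(x',y')+k\bigl(d_X(x,x')+d_Y(y,y')\bigr)\bigr]\bigr\}$; these are nonnegative, $k$-Lipschitz, bounded by $k$, and increase pointwise to $c$, and a standard argument (tightness of near-optimal couplings together with lower semicontinuity of $\gamma\mapsto\int c_j\,\mathrm{d}\gamma$) gives $\mathscr{C}_{c_k}(\mu,\nu)\uparrow\mathscr{C}_c(\mu,\nu)$. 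For fixed $k$ a $c_k$-transform of a bounded function is again bounded and $k$-Lipschitz, hence continuous, so the argument above applies with $c_k$ in place of $c$ and yields, for any $\delta>0$, a pair $(\phi,\psi)\in\Phi_{c_k}^*$ with $\int_X\phi\,\mathrm{d}\mu+\int_Y\psi\,\mathrm{d}\nu>\mathscr{C}_{c_k}(\mu,\nu)-\delta$.

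Finally one pushes this pair into $\Phi_c^*$ without losing value. It already satisfies \eqref{eq:constraint} for $c$ (as $c_k\le c$), has $\inf_X\phi=0$, $\sup_X\phi\le\sup_{X\times Y}c_k\le\sup_{X\times Y}c$, and $\sup_Y\psi\ge0$, so only the two $c$-dependent conditions remain, and both are restored by elementary, continuity-preserving moves: replace $\psi$ by $\max\{\psi,\inf_{x,y}(c-\phi)\}$ (admissible, since $\phi(x)+\inf_{x,y}(c-\phi)\le c(x,y)$, and not decreasing the integral) to get the third inequality of \eqref{eq:potential-bounds}, then add the resulting nonnegative finite constant $\inf_{x,y}\{c-\phi-\psi\}$ to $\psi$ to restore \eqref{eq:sharp-constraint}; neither step alters $\phi$ or lowers $\sup_Y\psi$, and the third inequality survives adding a nonnegative constant. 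Taking $k$ with $\mathscr{C}_{c_k}(\mu,\nu)>\mathscr{C}_c(\mu,\nu)-\delta$ now exhibits an element of $\Phi_c^*$ with integral exceeding $\mathscr{C}_c(\mu,\nu)-2\delta$, and $\delta\to0$ closes the argument. The subtle point throughout is ordering: the five normalizations interact (a shift correcting $\inf_X\phi$ moves $\sup_Y\psi$, and so on), so they must be imposed in the sequence conjugate $\to$ shift $\to$ read off, and the continuity lost to the $c$-transform must be recovered via the Lipschitz approximation rather than confronted head-on.
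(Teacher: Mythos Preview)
Your argument is correct, but the route differs from the paper's in an instructive way. You commit to replacing $(\phi,\psi)$ by a $c$-conjugate pair and then confront the loss of continuity by approximating $c$ from below by bounded Lipschitz costs $c_k$, invoking $\mathscr{C}_{c_k}\uparrow\mathscr{C}_c$, and finally repairing the two $c$-dependent constraints by a max and an additive constant. The paper avoids all of this by never actually replacing $\phi$ or $\psi$ with a $c$-transform: it uses $\psi^c$ only to compute scalar shift constants (namely $\lambda:=\sup_X(\phi-\psi^c)+\inf_X\psi^c$), and then builds $\hat\phi:=(\phi-\lambda)^+$ and $\hat\psi:=\max\{\psi+\inf_X\psi^c,\ \inf_{x,y}(c-\hat\phi)\}$ directly from the original continuous $\phi,\psi$. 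Since these are obtained from bounded continuous functions by subtracting constants, taking positive parts, and taking the maximum with a constant, continuity is never in jeopardy and no approximation of $c$ is needed. Your approach is conceptually cleaner at the core (the five normalizations fall out of conjugacy almost for free), but it imports outside machinery---tightness of couplings and convergence of the truncated costs---that the paper's more hands-on modification sidesteps entirely. Both approaches tacitly need $c\not\equiv+\infty$; you note this, and the paper's computation of $\lambda$ implicitly requires it too.
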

\begin{proof}
    Let $(\phi, \psi)\in C^0_b(X)\times C^0_b(Y)$ satisfying \eqref{eq:constraint} be given, and write
    \begin{equation}
        \psi^c(x):=\inf_Y\left(c(x, .)-\psi\right),\qquad
        \lambda:=\sup_X\left(\phi-\psi^c\right)+\inf_X\psi^c
    \end{equation}
    Since $c$ is bounded from below and $\phi$ and $\psi$ are bounded, $\psi^c$ is bounded from below and $\lambda$ is finite. 
    Let then $\hat\phi:=(\phi-\lambda)^+$. 
    As $\lambda\ge\inf_X\phi$, we have $\inf_X\hat\phi=0$, and clearly $\inf_X(\psi^c-\hat\phi)\le\inf_X\psi^c$. 
    By construction,
    \begin{equation}\label{eq:potential-shift-bound}
        \phi-\lambda\le\psi^c-\inf_X\psi^c.
    \end{equation}
    As the right hand side is nonnegative, it follows that $\hat\phi\le\psi^c-\inf_X\psi^c$, that is $\psi^c-\hat\phi\ge\inf_X\psi^c$. 
    We conclude that $\inf_X(\psi^c-\hat\phi)=\inf_X\psi^c$.
    
    Now define
    \begin{equation}
        \hat\psi:=\max\Set{\psi+\inf_X\psi^c, \inf_{x\in X, y\in Y}\Set{c(x, y)-\hat\phi(x)}}
    \end{equation}
    Again, note that bounds on $c$, $\hat\phi$ and $\psi$ imply that $\hat\psi$ is well-defined. 
    Since $\inf_X(\psi^c-\hat\phi)=\inf_X\psi^c$, we have
    \begin{equation}
        c(x, y)-\hat\phi(x)-\psi(y)-\inf_X\psi^c
            \ge \psi^c(x)-\hat\phi(x)-\inf_X\psi^c
            \ge 0
    \end{equation}
    for any $x\in X$, $y\in Y$, and
    \begin{equation}
        \inf_{x\in X, y\in Y}\Set{c(x, y)-\hat\phi(x)-\inf_{\xi\in X, \eta\in Y}\Set{c(\xi, \eta)-\hat\phi(\xi)}}=0.
    \end{equation}
    It follows that
    \begin{equation}
        \inf_{x\in X, y\in Y}\Set{c(x, y)-\hat\phi(x)-\hat\psi(y)}=0.
    \end{equation}
    
    Given $\varepsilon>0$, there are $x\in X$ such that $\psi^c(x)\le\inf_X\psi^c-\frac{\varepsilon}{2}$. 
    By \eqref{eq:potential-shift-bound}, $\hat\phi(x)\le\frac{\varepsilon}{2}$.
    For any $y\in Y$ such that
    \begin{equation}
        c(x, y)-\psi(y)\le\psi^c(x)+\frac{\varepsilon}{2}\le\inf_X\psi^c+\varepsilon
    \end{equation}
    it now follows that
    \begin{equation}
        \hat\psi(y)\ge\psi(y)+\inf_X\psi^c\ge c(x, y)-\varepsilon\ge-\varepsilon,
    \end{equation}
    that is, $\sup_Y\hat\psi\ge 0$.
    Consequently, 
    \begin{equation}
        \hat\phi
            \le\inf_{y\in Y}\Set{c(., y)-\hat\psi(y)}
            \le\sup_{y\in Y} c(., y)-\sup_Y\hat\psi
            \le\sup_{X\times Y}c.
    \end{equation}
    Finally,
    \begin{equation}
        \int_X\hat\phi\ud{\mu}+\int_Y\hat\psi\ud{\nu}
            \ge\int_X\phi\ud{\mu}-\lambda+\int_Y\psi\ud{\nu}+\inf_X\psi^c
            \ge\int_X\phi\ud{\mu}+\int_Y\psi\ud{\nu}
    \end{equation}
    since
    \begin{equation}
        \inf_X\psi^c-\lambda
            =-\sup_X(\phi-\psi_c)
            =\inf_{x\in X, y\in Y}\Set{c(x, y)-\phi(x)-\psi(y)}
            \ge 0.
    \end{equation}
    We conclude by applying Theorem \ref{th:kantorovich}.
\end{proof}
\begin{remark}
    Combining the bounds in \eqref{eq:potential-bounds} and \eqref{eq:sharp-constraint}, some more bounds can be derived:
    \begin{equation}\label{eq:potential-extended-bounds}
        -\sup_{X\times Y}c\le-\sup_X\phi\le\psi\le\sup_{X\times Y}c,  \qquad
        \psi\le\inf_{x\in X}\Set{c(x, .)-\phi(x)}\le\inf_{x\in X}\Set{c(x, .)}
    \end{equation}
    whenever $(\phi, \psi)\in\Phi_c^*$.
\end{remark}

In what follows, it will prove useful to consider a slightly generalized version of $\Phi_c^*$. 
For $\delta>0$, we will denote by $\Phi_c^\delta$ the set of pairs $(\phi, \psi)\in\Phi_c\cap(C^0_b(X)\times C^0_b(Y))$ such that
\begin{gather}
    -\delta\le\inf_X\phi\le 0, \qquad
    \sup_X\phi\le\sup_{X\times Y}c, \qquad
    \psi\ge-\sup_X\phi-\delta, \qquad
    \sup_Y\psi\ge-\delta. \label{eq:potential-bounds-relaxed} %\\
    %0\le\inf_{x\in X, y\in Y}\Set{c(x, y)-\phi(x)-\psi(y)}\le\delta.
\end{gather}
From this and \eqref{eq:constraint}, relaxed versions of \eqref{eq:potential-extended-bounds} can be derived.

\subsection{Euclidean space}
\label{s:euclidean}

In this section, we will prove that the supremum in the Kantorovich dual problem may be restricted to compactly supported smooth functions when $X$ and $Y$ are Euclidean spaces. 
The proof of the result below is adapted from \cite[Section 2]{Natile-Peletier-Savare-2010}.

As will be come clear in the argument below, some coercivity for the cost function is needed.
Clearly, it is not realistic to assume that sublevels of the cost $c$ are compact: even in case $X=Y=\Reals$ and $c(x, y)=|x-y|^p$ for some $p>0$, the cost is equal to zero on the unbounded set $\set{(x, x):x\in \Reals}$. 
However, such a cost does satisfy the following assumption.

\begin{assumption}[Local coercivity]\label{as:coercive-euclidean}
    Given any $R>0$, 
    \[
        0<\delta<\inf_{|x|<R}\sup_{y\in Y}\set{c(x, y)}
    \]
    and $M>0$, there exists an $R'>0$ such that $|x|<R$ and 
    \[
        c(x, y)<\min\Set{\sup_{\eta\in Y}\set{c(x, \eta)}-\delta, M}
    \]
    implies $|y|<R'$.
\end{assumption}

\begin{lemma}\label{th:kantorovich-smooth}
    Suppose $c(x, y):\Reals^m\times \Reals^n\to [0, +\infty)$ is lower semi-continuous and satisfies Assumption \ref{as:coercive-euclidean}. 
    Then
    \begin{equation}
        \mathscr{C}_c(\mu, \nu)
            =\sup_{(\phi, \psi)\in\Phi_c^\delta\cap(C^\infty_c(\Reals^m)\times C^\infty_c(\Reals^n))}\left(\int_{\Reals^m}\phi(x)\ud{\mu}+\int_{\Reals^n}\psi(y)\ud{\nu}\right)
    \end{equation}
    for all $\mu\in\mathscr{P}(X), \nu\in\mathscr{P}(Y)$ and $\delta>0$.
\end{lemma}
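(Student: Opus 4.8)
The plan is to start from Lemma~\ref{th:kantorovich-bounded} (or rather its relaxed variant $\Phi_c^\delta$) and then approximate an admissible continuous bounded pair $(\phi,\psi)$ by compactly supported smooth functions in two stages: first truncate to obtain compact support, then mollify to obtain smoothness, checking at each stage that the constraint \eqref{eq:constraint} is not violated by more than a controlled amount and that the integrals $\int\phi\ud\mu+\int\psi\ud\nu$ change by at most $\varepsilon$. Fix $\varepsilon>0$ and, using Lemma~\ref{th:kantorovich-bounded}, pick $(\phi,\psi)\in\Phi_c^*$ whose dual value is within $\varepsilon$ of $\mathscr C_c(\mu,\nu)$; by the remark after that lemma, $\phi$ and $\psi$ satisfy the two-sided bounds \eqref{eq:potential-extended-bounds}, so both are bounded by $\sup_{X\times Y}c$ in absolute value (if the cost is unbounded one first intersects with a large sublevel/ball, or argues that the value is $+\infty$ and is approximated anyway).

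For the truncation step, choose a large radius $R$ and a smooth cutoff $\chi_R$ that is $1$ on $B_R$, $0$ outside $B_{2R}$, and valued in $[0,1]$; replace $\phi$ by $\phi_R:=\phi\chi_R$ after subtracting $\inf\phi=0$ so that no sign problems occur, and similarly for $\psi$. The point of Assumption~\ref{as:coercive-euclidean} is exactly here: for $|x|<R$, the constraint is only active (up to the slack $\delta$) for $y$ in a bounded set $B_{R'}$, so after cutting off $\phi$ outside $B_R$ and $\psi$ outside $B_{R'}$ one still has $\phi_R(x)+\psi_{R'}(y)\le c(x,y)+\delta$ for all $(x,y)$: if $|x|\ge R$ then $\phi_R(x)=0\le\phi(x)+\sup\phi$ is not an issue since we subtracted the infimum and the bound on $\psi$ via \eqref{eq:potential-extended-bounds} keeps the sum controlled; if $|x|<R$ and $|y|\ge R'$ then either $c(x,y)\ge M$ is large or $c(x,y)\ge\sup_\eta c(x,\eta)-\delta\ge\phi(x)+\psi(y)+($nonneg$)$, while $\psi_{R'}(y)=0$ only makes the left side smaller. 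Since $\mu,\nu$ are probability measures, by Chebyshev $\mu(B_R^c)$ and $\nu(B_{R'}^c)$ are small for $R$ large, so the integrals change by at most $\varepsilon$. This produces a compactly supported continuous pair lying (up to adjusting constants by $\delta$) in $\Phi_c^\delta$.

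For the mollification step, convolve $\phi_R$ and $\psi_{R'}$ with a standard mollifier $\rho_\eta$; since $c$ is only lower semicontinuous, convolution can increase the left side of the constraint, but because $\phi_R,\psi_{R'}$ are uniformly continuous (continuous with compact support) the sup of $\phi_R*\rho_\eta+\psi_{R'}*\rho_\eta - (\phi_R+\psi_{R'})$ tends to $0$ as $\eta\to0$, so for $\eta$ small enough the smoothed pair satisfies $\le c+\delta$ everywhere; absorbing the $\delta$ and $\eta$-errors into the constants (which is permitted because $\Phi_c^\delta$ only asks for the relaxed bounds \eqref{eq:potential-bounds-relaxed}) gives a pair in $\Phi_c^\delta\cap(C^\infty_c\times C^\infty_c)$, and uniform convergence on the (fixed, compact) supports shows the dual value moved by at most $\varepsilon$. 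Letting $\varepsilon\to0$ finishes the argument, the reverse inequality being trivial since every such smooth pair is admissible. The main obstacle is the truncation step: one must verify carefully, using exactly the quantifier structure of Assumption~\ref{as:coercive-euclidean}, that cutting $\psi$ off outside a ball large enough relative to the cutoff radius of $\phi$ does not create a violation of the constraint where the cost is small; the mollification step is routine given uniform continuity, and the bookkeeping of the constants $\delta$ is exactly what the relaxed class $\Phi_c^\delta$ was introduced to handle.
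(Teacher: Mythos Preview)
Your proposal is correct and takes essentially the same route as the paper: start from a pair in $\Phi_c^*$, use the local coercivity Assumption~\ref{as:coercive-euclidean} to justify truncating to compact support without violating the constraint (up to a small error) when one variable is large, then mollify using uniform continuity of the truncated pair. The paper organizes this slightly differently---it performs the cutoff and the mollification in a single convolution, subtracting a small constant on a slightly enlarged ball \emph{before} convolving so that the smooth functions actually satisfy $\hat\phi\le\phi$ on $B_{R_x}$, $\hat\phi\le 0$ outside, and analogously for $\hat\psi$; this makes the case analysis for the constraint exact rather than ``up to $\delta$'' and spares the error-tracking you describe as ``absorbing the $\delta$ and $\eta$-errors into the constants,'' but the underlying mechanism is the same.
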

\begin{proof}
    Throughout the proof, let $\eta:[0, +\infty)\to[0, 1]$ be a smooth function such that $\eta(z)=1$ for $z\le\frac{1}{2}$ and $\eta(z)=0$ for $z\ge 1$, and denote by $B_r\subset\Reals^k$ the ball of radius $r>0$ centered at $0$.
    For $k\in\Naturals$, $\rho>0$, define $\eta^k_\rho\in C^\infty_c(\Reals^k)$ by
    \begin{equation}
        \eta^k_\rho(x)=\frac{\eta\left(\frac{|x|}{\rho}\right)}{\alpha_k\rho^k},\qquad
        \alpha_k:=\int_{\Reals^k}\eta(|x|)\ud{x}.
    \end{equation}

    Let then $0<\varepsilon<\delta$, and suppose that $(\phi, \psi)\in\Phi_c^*$. 
    There exists $R_x>0$ be such that
    \begin{equation}
        \int_{\Reals^m\setminus B_{R_x-1}}\phi\ud{\mu}<\frac{\varepsilon}{6}.
    \end{equation}
    As $\phi$ is continuous, $\eta^m_\rho\phi\to\phi$ uniformly on bounded sets as $\rho\downarrow 0$. 
    In particular, there exists $\rho_x\in(0, 1)$ such that
    \begin{equation}
        \sup_{B_{R_x}}\left(\eta^m_{\rho_x}*\phi-\phi\right)^+<\frac{\varepsilon}{6}.
    \end{equation}
    Define then 
    \begin{equation}
        \hat\phi:=\eta^m_{\rho_x}*\left(\phi\chi_{B_{R_x-\rho_x}}-\tfrac{\varepsilon}{3}\chi_{B_{R_x+\rho_x}}\right),
    \end{equation}
    where $\chi_E$ is the characteristic function of $E$, defined by $\chi_E(x)\equiv 1$ on $E$ and $\chi_E(x)\equiv 0$ outside $E$.
    By construction $\hat\phi$ is smooth, and supported in $B_{R_x+2\rho_x}$. 
    Moreover, 
    \begin{equation}
        \hat\phi
            \le\eta^m_{\rho_x}\phi-\tfrac{\varepsilon}{3}
            \le\phi-\tfrac{\varepsilon}{6}
    \end{equation}
    in $B_{R_x}$. 
    As $\hat\phi\le 0$ outside $B_{R_x}$, $\hat\phi\le\phi$ everywhere.
    Finally, $\hat\phi\ge-\frac{\varepsilon}{3}$ everywhere.
    
    Since $\sup\psi\ge 0$, 
    \begin{equation}
        \phi(x)\le\sup_{y\in\Reals^n}\Set{c(x, y)}.
    \end{equation}
    By Assumption \ref{as:coercive-euclidean}, there exists $R_y>0$ such that $|x|<R_x$ and $|y|\ge R_y$ implies
    \begin{equation}\label{eq:potential-estimate}
        c(x, y)
            \ge\min\Set{\sup_{\eta\in Y}\set{c(x, \eta)}, \sup_X\phi}-\tfrac{\varepsilon}{6}
            \ge\min\Set{\phi(x), \sup_X\phi}-\tfrac{\varepsilon}{6}
            \ge\phi(x)-\tfrac{\varepsilon}{6}.
    \end{equation}
    %Therefore, 
    %\begin{equation}\begin{split}
        %\inf_{x\in B_{R_x+\rho_x}}\set{c(x, y)-\hat\phi(x)}
        %&   \ge\inf_{x\in B_{R_x+\rho_x}}\Set{c(x, y)-\phi(x)+\tfrac{\varepsilon}{6}} \\
        %&   \ge\inf_{x\in B_{R_x+\rho_x}}\Set{c(x, y)-M} \\
    %\end{split}\end{equation}
    %for any $y\in\Reals^n$, where
    %\begin{equation}
        %M:=\min\Set{\sup\phi, \sup_{y\in\Reals^n}\Set{c(x, y)}}-\tfrac{\varepsilon}{6}<\sup_{\Reals^m\times\Reals^n}c.
    %\end{equation}
    %By assumption, there exists $R_y$ such that $c(x, y)\ge M$ whenever $x\in B_{R_x}$, $y\in\Reals^n\setminus B_{R_y}$.
    Without loss of generality, it may also be assumed that
    \begin{equation}
        \int_{\Reals^n\setminus B_{R_y-1}}\psi\ud{\nu}<\frac{\varepsilon}{4}.
    \end{equation}
    %It now follows that $\hat\phi(x)\le c(x, y)$ whenever $|x|<R_x+\rho_x$ and $|y|\ge R_y$.
    As before, there exists $\rho_y\in(0, 1)$ such that
    \begin{equation}
        \sup_{B_{R_y}}\left(\eta^n_{\rho_y}*\psi-\psi\right)^+<\frac{\varepsilon}{4}.
    \end{equation}
    Again, defining 
    \begin{equation}
        \hat\psi:=\eta^n_{\rho_y}*\left(\psi\chi_{B_{R_y-\rho_y}}-\tfrac{\varepsilon}{4}\chi_{B_{R_y+\rho_y}}\right)
    \end{equation}
    results in $\hat\psi\le\psi$ on $B_{R_y}$, and $\hat\psi\le 0$ outside $B_{R_y}$.
    Moreover, $\hat\psi\ge\psi-\frac{\varepsilon}{4}$ is smooth and compactly supported.
    By construction and \eqref{eq:potential-estimate},
    \begin{equation}\begin{split}
        \hat\phi(x)+\hat\psi(y)
        %&   \le\begin{cases}
                %\phi(x)+\hat\psi(y)-\frac{\varepsilon}{6}, &   \text{if $|x|<R_x$}, \\
                %\hat\psi(y), &                          \text{if $|x|\ge R_x$},
            %\end{cases} \\
        &   \le\begin{cases}
                \phi(x)+\psi(y)-\tfrac{\varepsilon}{6}, &  \text{if $|x|<R_x$, $|y|<R_y$, } \\
                \phi(x)-\tfrac{\varepsilon}{6}, &          \text{if $|x|<R_x$, $|y|\ge R_y$, } \\
                \psi(y), &          \text{if $|x|\ge R_x$, $|y|<R_y$, } \\
                0, &                \text{otherwise, }
            \end{cases} \\
        &   \le c(x, y),
    \end{split}\end{equation}
    and $(\phi, \psi)\in\Phi_c^\delta$.
    Moreover,
    \begin{equation}\begin{split}
        \int_{\Reals^m}\hat\phi\ud{\mu}+\int_{\Reals^n}\hat\psi\ud{\nu}
        &   \ge\int_{B_{R_x}}\phi\ud{\mu}-\frac{\varepsilon}{3}+\int_{B_{R_y}}\psi\ud{\nu}-\frac{\varepsilon}{4} \\
        &    >\int_{\Reals^m}\phi\ud{\mu}+\int_{\Reals^n}\psi\ud{\nu}-\varepsilon.
    \end{split}\end{equation}
    An application of Lemma \ref{th:kantorovich-bounded} now concludes the proof.
\end{proof}

%%%%%%%% INTRODUCTION %%%%%%%%

\section{Fr\'echet space}
\label{s:cylinder}

\subsection{Fr\'echet Spaces and Finite Dimensional Approximations}

In this section, we focus on the case where $X$ and $Y$ are Fr\'echet spaces.
Remember that a locally convex topological vector space is called a Fr\'echet space if its topology is generated by a translation invariant metric that makes the space complete.
In the setting of Fr\'echet spaces, one must be careful with the word `bounded': as many different metrics may generate the topology, boundedness is not well-defined in the usual metric sense.
Instead, a subset $W$ of $X$ is bounded of for every open neighborhood $U$ of $0$, $W\subset tU$ for some $t>0$.

In order to properly define the class of cylindrical functions on a Fr\'echet space $X$, it is necessary to define some kind of coordinate system.
One obvious choice would be to demand that $X$ has a Schauder basis. 
It is possible, however, to consider a more general class of spaces: we will consider spaces that have the so-called approximation property, that is, spaces that admit a family $\set{P_k:X\to X}_{k\in\Naturals}$ of continuous finite rank operators such that $P_k$ converges to the identity uniformly on precompact subsets of $X$.

\begin{remark}
    In fact, it is sufficient to assume that convergence of $P_k$ to the identity is pointwise, since pointwise convergence of linear maps from a Fr\'echet space to any topological vector space implies uniform convergence on compact subsets.
    This fact is easily proven from the Banach-Steinhaus theorem \cite[2.5]{Rudin-Functional-Analysis}.
    It also show that $\set{P_k}_{k\in\Naturals}$ is equicontinuous.
\end{remark}
%\begin{remark}
    %In fact, it is sufficient to assume that convergence of $P_k$ to $P$ is pointwise: in this case, the Banach-Steinhaus theorem \cite[2.5]{Rudin-Functional-Analysis} implies that $\set{P_k}$ is equicontinuous, that is, for every neighborhood $W$ of $0$, there exists a neighborhood $V$ of $0$ such that $P_k[V]\subset W$ for all $k$. 
    %Equivalently, for every $r>0$, there exists $r'>0$ such that $\varrho(P_k x, 0)<r$ whenever $\varrho(x, 0)<r'$. 
    %For $K\csubset X$, define 
    %\begin{equation}
        %\varepsilon_k:=\sup_{x\in K}\Set{\varrho(P_k x, x)}.
    %\end{equation}
    %As $K$ is compact, the supremum is in fact a maximum, say $\varepsilon_k=\varrho(P_k x_k, x_k)$, and the sequence $\set{x_k}_{k\in\Naturals}$ has a converging subsequence $x_{k(j)}\to x^*\in K$. 
    %Now
    %\begin{equation}
        %\varepsilon_{k(j)}
            %\le\varrho(P_{k(j)} x_{k(j)}, P_{k(j)} x^*)+\varrho(P_{k(j)} x^*, x^*)+\varrho(x^*, x_{k(j)})
            %\to 0
    %\end{equation}
    %as $j\to\infty$ since $\set{P_k}$ is equicontinuous. 
%\end{remark}
\begin{remark}
    Note that any Fr\'echet space with the approximation property is also separable and hence a Polish space.
\end{remark}

By definition, the range of a finite rank operator is isomorphic to some Euclidean space, that is, there exists $n\in\Naturals$ and a linear bijection $\iota_P:P[X]\to\Reals^n $ that is continuous and has continuous inverse. 
In what follows, we will not make the distinction between $P$ and $\iota_P\circ P[X]$ if there is not chance of confusion.
We write
\begin{equation}
    \Cyl(X; P):=\Set{\varphi\circ P:\varphi\in C^\infty_c(\Reals^n)},
\end{equation}
and call the elements of $\Cyl(X; P)$ cylindrical functions with respect to $P$. 
A function is called cylindrical if it is in $\Cyl(X; P)$ for some $P$.

In the remainder of this section, we will assume that $X$ and $Y$ are Fr\'echet spaces, that $\set{P_k}_{k\in\Naturals}$ and $\set{Q_k}_{k\in\Naturals}$ are families of continuous finite rank operators converging to the identity map on $X$ and $Y$, respectively.

\subsection{Statement and Proof of the Main Results}

In order to apply the results from Section \ref{s:euclidean}, some requirement similar to Assumption \ref{as:coercive-euclidean} is needed. 
More precisely, Lemma \ref{th:kantorovich-smooth} will be applied to the cost composed with $P_k$ and $Q_k$, which means that this function has to satisfy Assumption \ref{as:coercive-euclidean}. 
One can make the following assumption on $c$ itself to ensure that this is the case.

\begin{assumption}\label{as:coercive-frechet}
    Given any bounded $V\subset X$, $0<\delta<\inf_{x\in B_R(x)}\sup_{y\in Y}\set{c(x, y)}$ and $M>0$, there exists a bounded set $W\subset Y$ such that $x\in V$ and
    \[
        c(x, y)<\min\Set{\sup_{\eta\in Y}\set{c(x, \eta)}-\delta, M}
    \]
    implies $y\in W$.
\end{assumption}
In what follows, we will not assume that this assumption holds, but use a logically weaker requirement. 
We will see later, though, that Assumption \ref{as:coercive-frechet} holds for some interesting examples.

\begin{theorem}\label{th:Kantorovich-lower}
    For $k\in\Naturals$, define $c_k:X\times Y\to [0, +\infty]$ by $c_k(x, y)=c(P_k x, Q_k y)$, and suppose that the restriction of $c$ to $P_k[X]\times Q_k[Y]$ satisfies Assumption \ref{as:coercive-euclidean}. 
    Then
    \begin{equation}\label{eq:Kantorovich-lower}
        \mathscr{C}_c(\mu, \nu)\le\liminf_{k\to\infty}\sup_{(\phi, \psi)\in\Phi_{c_k}^\delta\cap(\Cyl(X; P_k)\times\Cyl(Y; Q_k)}\Set{\int_X\phi\ud{\mu}+\int_Y\psi\ud{\nu}}
    \end{equation}
    for any $\mu\in\mathscr{P}(X)$, $\nu\in\mathscr{P}(Y)$ and any $\delta>0$.
\end{theorem}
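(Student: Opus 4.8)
The plan is to push the marginals forward to the finite‑dimensional ranges $P_k[X]$ and $Q_k[Y]$, apply the Euclidean result of Lemma \ref{th:kantorovich-smooth} there, read the resulting smooth potentials back as cylindrical functions, and finally recover $\mathscr{C}_c(\mu,\nu)$ by letting $k\to\infty$. Fix $\mu\in\mathscr{P}(X)$, $\nu\in\mathscr{P}(Y)$ and $\delta>0$, and set $\mu_k:=(P_k)_\#\mu$, $\nu_k:=(Q_k)_\#\nu$. Since $\mu_k$ is concentrated on $P_k[X]$ and $\nu_k$ on $Q_k[Y]$, every coupling of $\mu_k$ and $\nu_k$ is concentrated on $P_k[X]\times Q_k[Y]$, so $\mathscr{C}_c(\mu_k,\nu_k)$ is exactly the transport cost associated with the restriction of $c$ to $P_k[X]\times Q_k[Y]$; identifying $P_k[X]$ with $\Reals^{m_k}$ and $Q_k[Y]$ with $\Reals^{n_k}$ via $\iota_{P_k},\iota_{Q_k}$, this restriction is lower semicontinuous and, by hypothesis, satisfies Assumption \ref{as:coercive-euclidean}.

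First I would apply Lemma \ref{th:kantorovich-smooth} to this data, which yields
\[
    \mathscr{C}_c(\mu_k,\nu_k)=\sup_{(\varphi,\chi)\in\Phi^\delta\cap(C^\infty_c(\Reals^{m_k})\times C^\infty_c(\Reals^{n_k}))}\Set{\int\varphi\ud{\mu_k}+\int\chi\ud{\nu_k}},
\]
the set $\Phi^\delta$ being understood relative to the restricted cost. Given such a pair, put $\phi:=\varphi\circ P_k$ and $\psi:=\chi\circ Q_k$; then $\phi\in\Cyl(X;P_k)$, $\psi\in\Cyl(Y;Q_k)$, the inequality $\phi(x)+\psi(y)=\varphi(P_kx)+\chi(Q_ky)\le c(P_kx,Q_ky)=c_k(x,y)$ holds everywhere, and because $P_k$ maps $X$ onto $P_k[X]$ and $Q_k$ maps $Y$ onto $Q_k[Y]$ one has $\inf_X\phi=\inf\varphi$, $\sup_X\phi=\sup\varphi$ and $\sup_Y\psi=\sup\chi$, so the four bounds in \eqref{eq:potential-bounds-relaxed} pass verbatim from $(\varphi,\chi)$ to $(\phi,\psi)$ (using $\sup_{P_k[X]\times Q_k[Y]}c\le\sup_{X\times Y}c$). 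Hence $(\phi,\psi)\in\Phi_{c_k}^\delta\cap(\Cyl(X;P_k)\times\Cyl(Y;Q_k))$, and the change of variables gives $\int_X\phi\ud\mu=\int\varphi\ud{\mu_k}$ and $\int_Y\psi\ud\nu=\int\chi\ud{\nu_k}$. Taking the supremum over $(\varphi,\chi)$ yields
\[
    \mathscr{C}_c(\mu_k,\nu_k)\le\sup_{(\phi,\psi)\in\Phi_{c_k}^\delta\cap(\Cyl(X;P_k)\times\Cyl(Y;Q_k))}\Set{\int_X\phi\ud\mu+\int_Y\psi\ud\nu}.
\]

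Next I would show $\mathscr{C}_c(\mu,\nu)\le\liminf_{k\to\infty}\mathscr{C}_c(\mu_k,\nu_k)$. Let $(\phi,\psi)\in\Phi_c\cap(C^0_b(X)\times C^0_b(Y))$ be any pair for which \eqref{eq:constraint} holds everywhere; such pairs still exhaust the supremum in \eqref{eq:duality}, as recorded after Theorem \ref{th:kantorovich}. Restricted to $P_k[X]\times Q_k[Y]$ this is an admissible pair for the restricted cost, so for every coupling $\gamma$ of $\mu_k$ and $\nu_k$ we have $\int_X\phi\ud{\mu_k}+\int_Y\psi\ud{\nu_k}=\int(\phi(x)+\psi(y))\ud{\gamma(x,y)}\le\int c\ud\gamma$, whence $\int_X\phi\ud{\mu_k}+\int_Y\psi\ud{\nu_k}\le\mathscr{C}_c(\mu_k,\nu_k)$. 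Since $P_kx\to x$ and $Q_ky\to y$ for every $x\in X$, $y\in Y$, and $\phi,\psi$ are bounded and continuous, dominated convergence gives $\int_X\phi\ud{\mu_k}=\int_X\phi(P_kx)\ud{\mu(x)}\to\int_X\phi\ud\mu$ and likewise for $\psi$; consequently $\int_X\phi\ud\mu+\int_Y\psi\ud\nu\le\liminf_{k\to\infty}\mathscr{C}_c(\mu_k,\nu_k)$. Taking the supremum over $(\phi,\psi)$ and invoking Theorem \ref{th:kantorovich} once more gives $\mathscr{C}_c(\mu,\nu)\le\liminf_{k\to\infty}\mathscr{C}_c(\mu_k,\nu_k)$. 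Combining this with the previous display and passing to the $\liminf$ in $k$ establishes \eqref{eq:Kantorovich-lower}.

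The two invocations of the duality theorems and the dominated convergence step are routine; the part requiring care is the bookkeeping around the finite‑dimensional identifications — that the restricted cost really falls under Assumption \ref{as:coercive-euclidean}, that it is precisely the surjectivity of $P_k$ onto $P_k[X]$ that makes the $\Phi^\delta$‑bounds transfer under composition, and that $\mu_k\to\mu$ and $\nu_k\to\nu$ narrowly so that dominated convergence applies. I do not expect a genuine obstacle beyond this.
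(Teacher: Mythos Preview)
Your argument is correct and follows essentially the same route as the paper: pick a near-optimal bounded continuous pair $(\phi,\psi)$ for $c$, use $P_k x\to x$, $Q_k y\to y$ together with dominated convergence to pass to the pushforwards, apply Lemma~\ref{th:kantorovich-smooth} on $P_k[X]\times Q_k[Y]$, and then compose the resulting $C^\infty_c$ potentials with $P_k,Q_k$ to land in $\Phi_{c_k}^\delta\cap(\Cyl(X;P_k)\times\Cyl(Y;Q_k))$. The only cosmetic difference is that you factor the argument through the intermediate quantity $\mathscr{C}_c(\mu_k,\nu_k)$ and prove $\mathscr{C}_c(\mu,\nu)\le\liminf_k\mathscr{C}_c(\mu_k,\nu_k)$ separately, whereas the paper runs the same chain of inequalities in one pass starting from a single $\varepsilon$-almost maximizer in $\Phi_c^*$; your organization is arguably cleaner but not a different idea.
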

\begin{proof}
    Let $\varepsilon>0$, and choose $(\phi, \psi)\in\Phi_c^*$ such that
    \begin{equation}
        \mathscr{C}_c(\mu, \nu)\le\int_X\phi\ud{\mu}+\int_Y\psi\ud{\nu}+\delta.
    \end{equation}
    By assumption, $\phi\circ P_k\to\phi$, and $\psi\circ Q_k\to\psi$.
    As $\phi$ and $\psi$ are bounded, we obtain
    \begin{equation}\label{eq:liminf:almost-maximizer}
        \mathscr{C}_c(\mu, \nu)
            \le\lim_{k\to\infty}\left(\int_X\phi\circ P_k\ud{\mu}+\int_Y\psi\circ Q_k\ud{\nu}\right)+\varepsilon.
    \end{equation}
    Since $(\phi, \psi)$ satisfies \eqref{eq:constraint} for all $x\in X$, $y\in Y$, the restrictions of $\phi$ and $\psi$ to $P_k[X]$ and $Q_k[Y]$, respectively, satisfy \eqref{eq:constraint} as well. 
    Therefore, Lemma \ref{th:kantorovich-smooth} implies
    \begin{multline}
        \int_X\phi\circ P_k\ud{\mu}+\int_Y\psi\circ Q_k\ud{\nu} \\
        \begin{aligned}
        &   =\int_{\Reals^m}\phi\ud{(P_k)_\#\mu}+\int_{\Reals^n}\psi\ud{(Q_k)_\#\nu} \\
        &   \le\sup_{(\hat\phi, \hat\psi)\in\Phi_c^\delta\cap(C^\infty_c(\Reals^m)\times C^\infty_c(\Reals^n))}\Set{\int_{\Reals^m}\hat\phi\ud{(P_k)_\#\mu}+\int_Y\hat\psi\ud{(Q_k)_\#\nu}}.
    \end{aligned}\end{multline}
    Now $\tilde\phi:=\hat\phi\circ P_k\in\Cyl(X; P_k)$ and $\tilde\psi:=\hat\psi\circ Q_k\in\Cyl(Y; Q_k)$ whenever $\hat\phi\in C^\infty_c(\Reals^m)$, $\hat\psi\in C^\infty_c(\Reals^n)$. 
    Moreover, $(\tilde\phi, \tilde\psi)\in\Phi_{c_k}^\delta$ by construction.
    Therefore
    \begin{multline}
        \int_X\phi\circ P_k\ud{\mu}+\int_Y\psi\circ Q_k\ud{\nu} \\
            \le\sup_{(\tilde\phi, \tilde\psi)\in\Phi_{c_k}^\delta\cap(\Cyl(X; P_k)\times\Cyl(Y: Q_k))}\Set{\int_{\Reals^m}\tilde\phi\ud{\mu}+\int_Y\tilde\psi\ud{\nu}}.
    \end{multline}
    Combining this with \eqref{eq:liminf:almost-maximizer} results in
    \begin{equation}
        \mathscr{C}_c(\mu, \nu)
            \le\liminf_{k\to\infty}\sup_{(\tilde\phi, \tilde\psi)\in\Phi_{c_k}^\delta\cap(\Cyl(X; P_k)\times\Cyl(Y: Q_k))}\Set{\int_{\Reals^m}\tilde\phi\ud{\mu}+\int_Y\tilde\psi\ud{\nu}}+\varepsilon.
    \end{equation}
    As $\varepsilon>0$ is arbitrary, we conclude.
\end{proof}

In order to obtain a lower bound, more structure has to be assumed for $c$. 
One possibility is to assume that $c$ is continuous and its growth can be controlled in a certain way.
Under these assumptions, the converse of \eqref{eq:Kantorovich-lower} can be shown by considering the Monge-Kantorovich optimal transport problem, rather than the dual problem.

\begin{proposition}\label{th:Kantorovich-upper}
    Suppose that, additionally, $c$ is upper semicontinuous (and hence continuous), and there exists nondecreasing functions $f, g:[0, +\infty)\to[0, +\infty)$ such that
    \begin{equation}
        c(\lambda x, y)\le f(\lambda)c(x, y), \qquad
        c(x, \lambda y)\le g(\lambda)c(x, y),
    \end{equation}
    for any $\lambda>0$, $x\in X$ and $y\in Y$.
    Then 
    \begin{equation}\label{eq:Kantorovich-upper}
        \mathscr{C}_c(\mu, \nu)
            \mathscr{C}_c(\mu, \nu)\ge\limsup_{k\to\infty}\sup_{(\phi, \psi)\in\Phi_{c_k}^\delta\cap(\Cyl(X; P_k)\times\Cyl(Y; Q_k)}\Set{\int_X\phi\ud{\mu}+\int_Y\psi\ud{\nu}}
    \end{equation}
\end{proposition}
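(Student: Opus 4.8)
The plan is to derive \eqref{eq:Kantorovich-upper} from the definition of the transport cost together with the pointwise convergence $P_k\to\mathrm{id}$ and $Q_k\to\mathrm{id}$, using the continuity of $c$ and the growth bounds to pass to the limit under the integral sign.

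First I would reduce the statement to one purely about transport costs. If $(\phi,\psi)\in\Phi_{c_k}^\delta$, then $\phi\in L^1(\mu)$, $\psi\in L^1(\nu)$ and $\phi(x)+\psi(y)\le c_k(x,y)$ for $\mu$-almost every $x$ and $\nu$-almost every $y$; hence for any $\gamma'\in\mathscr{P}(X\times Y)$ with marginals $\mu$ and $\nu$ one has
\[
  \int_X\phi\ud{\mu}+\int_Y\psi\ud{\nu}=\int_{X\times Y}\bigl(\phi(x)+\psi(y)\bigr)\ud{\gamma'(x,y)}\le\int_{X\times Y}c_k\ud{\gamma'},
\]
so that, taking the infimum over such $\gamma'$, the value of every admissible pair is at most $\mathscr{C}_{c_k}(\mu,\nu)$. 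In particular the supremum in \eqref{eq:Kantorovich-upper} is bounded by $\mathscr{C}_{c_k}(\mu,\nu)$, and it suffices to prove $\limsup_{k\to\infty}\mathscr{C}_{c_k}(\mu,\nu)\le\mathscr{C}_c(\mu,\nu)$. If $\mathscr{C}_c(\mu,\nu)=+\infty$ there is nothing to prove; otherwise, since $c$ is nonnegative and lower semicontinuous, there is an optimal plan $\gamma$, i.e.\ $\gamma\in\mathscr{P}(X\times Y)$ with marginals $\mu,\nu$ and $\int_{X\times Y}c\ud{\gamma}=\mathscr{C}_c(\mu,\nu)<\infty$. Using $\gamma$ as a competitor for the cost $c_k$ gives
\[
  \mathscr{C}_{c_k}(\mu,\nu)\le\int_{X\times Y}c_k\ud{\gamma}=\int_{X\times Y}c(P_kx,Q_ky)\ud{\gamma(x,y)}.
\]

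The task is thus to show $\limsup_{k\to\infty}\int_{X\times Y}c(P_kx,Q_ky)\ud{\gamma}\le\int_{X\times Y}c\ud{\gamma}$. Because $P_kx\to x$ in $X$ and $Q_ky\to y$ in $Y$ for every $x$ and $y$, and because $c$ is continuous (being both lower and upper semicontinuous), we have $c(P_kx,Q_ky)\to c(x,y)$ pointwise on $X\times Y$. If the integrands $\set{(x,y)\mapsto c(P_kx,Q_ky)}_{k\in\Naturals}$ admit a common $\gamma$-integrable majorant, the reverse Fatou lemma then yields
\[
  \limsup_{k\to\infty}\int_{X\times Y}c(P_kx,Q_ky)\ud{\gamma}\le\int_{X\times Y}\limsup_{k\to\infty}c(P_kx,Q_ky)\ud{\gamma}=\int_{X\times Y}c\ud{\gamma},
\]
and combining this with the reduction above completes the proof. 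Alternatively, one may note that $(P_k\times Q_k)_\#\gamma$ converges weakly to $\gamma$ and invoke uniform integrability of the family.

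The crux is therefore the construction of such a majorant, and this is exactly where the growth hypotheses are used. Since $\set{P_k}_{k\in\Naturals}$ and $\set{Q_k}_{k\in\Naturals}$ converge pointwise to the identity they are equicontinuous (Banach--Steinhaus), and combined with the bounds $c(\lambda x,y)\le f(\lambda)c(x,y)$ and $c(x,\lambda y)\le g(\lambda)c(x,y)$, with $f,g$ nondecreasing, I would aim for an estimate of the form $c(P_kx,Q_ky)\le f(\Lambda)\,g(\Lambda')\,c(x,y)$ with dilation factors $\Lambda,\Lambda'$ independent of $k$ coming from the equicontinuity; since $\int_{X\times Y}c\ud{\gamma}<\infty$, the function $f(\Lambda)g(\Lambda')c$ is then the desired $\gamma$-integrable majorant. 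I expect the real difficulty to sit precisely at this step: the scaling bounds are phrased for honest scalar dilations $x\mapsto\lambda x$, whereas $P_kx$ is merely a finite-rank image of $x$ controlled, uniformly in $k$, by a dilate of $x$ (or by a compact neighbourhood of $x$), so one has to either exploit the additional structure that controls $c$ in the concrete examples of Section~\ref{s:examples}, or split $\gamma$ into a part carried by a compact set---on which $P_k,Q_k$ are uniformly close to the identity and $c$ is uniformly continuous, so the integral converges---and a tail of arbitrarily small mass, on which the growth bounds still furnish uniform control of $c(P_kx,Q_ky)$.
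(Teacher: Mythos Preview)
Your approach is essentially the paper's own: assume $\mathscr{C}_c(\mu,\nu)<\infty$, pick a plan $\gamma$ of finite cost, bound the supremum over cylindrical pairs by $\int_{X\times Y} c(P_kx,Q_ky)\ud{\gamma}$, and pass to the $\limsup$ via the reverse Fatou lemma, using continuity of $c$ for the pointwise limit and a $\gamma$-integrable majorant built from the growth bounds. The one place you hesitate---the construction of the majorant from the scalar-dilation hypotheses---is exactly the step the paper does \emph{not} elaborate: it simply writes ``By assumption, $c(P_kx,Q_ky)\le f(\lambda)g(\lambda)c(x,y)$'' with $\lambda$ unspecified and proceeds. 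Your observation that the stated hypothesis literally controls only $c(\lambda x,y)$ and $c(x,\lambda y)$, not $c(P_kx,Q_ky)$ for a general finite-rank $P_k$, is accurate; the paper treats that bound as immediate rather than arguing it. So your proposal already contains everything the paper's proof contains, plus a bit more scrutiny. A minor difference: you bound the cylindrical supremum by $\mathscr{C}_{c_k}(\mu,\nu)$ via the trivial weak-duality inequality, whereas the paper identifies it with $\mathscr{C}_c\bigl((P_k)_\#\mu,(Q_k)_\#\nu\bigr)$ by invoking Lemma~\ref{th:kantorovich-smooth}; both routes feed into the same estimate $\int c(P_kx,Q_ky)\ud{\gamma}$.
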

\begin{proof}
    In case $\mathscr{C}_c(\mu, \nu)=+\infty$ there is nothing to prove, so assume that $\gamma\in\Gamma(\mu, \nu)$ is such that
    \begin{equation}
        \int_{X\times Y}c(x, y)\ud{\gamma(x, y)}<+\infty.
    \end{equation}
    By assumption, $c(P_k x, Q_k y)\le f(\lambda)g(\lambda)c(x, y)$, which means that
    \begin{equation}\begin{split}
        \int_{X\times Y}c(x, y)\ud{\gamma(x, y)}
        &   \ge\int_{X\times Y}\limsup_{k\to\infty}c(P_k x, Q_k y)\ud{\gamma(x, y)} \\
        &   \ge\limsup_{k\to\infty}\int_{X\times Y}c(P_k x, Q_k y)\ud{\gamma(x, y)} \\
        &   \ge\limsup_{k\to\infty}\mathscr{C}_c\left((P_k)_\#\mu, (Q_k)_\#\nu\right)
    \end{split}\end{equation}
    As $P_k[X]$ and $Q_k[Y]$ isometrically isomorphic to some finite-dimensional Euclidean space, the right hand side is equal to the right hand side of \eqref{eq:Kantorovich-upper} by Theorem \ref{th:kantorovich-smooth}.
    Taking the infimum over $\gamma\in\Gamma(\mu, \nu)$ now implies the result.
\end{proof}

Combining the previous two results implies that the optimal transport cost can be written as the limit of suprema over classes of cylindrical functions. 
However, if the operators $P_k$ and $Q_k$ can be chosen such that $c_k$ is in fact at most $c$, it is no longer needed to take a limit of suprema.

\begin{theorem}\label{th:Kantorovich-cylindrical}
    In case $c(P_k x, Q_k y)\le c(x, y)$ for all $k\in\Naturals$, $x\in X$ and $y\in Y$,
    \begin{equation}\label{eq:Kantorovich-cylindrical}
        \mathscr{C}_c(\mu, \nu)
            =\sup_{k\in\Naturals, (\phi, \psi)\in\Phi_c^\delta\cap(\Cyl(X; P_k)\times\Cyl(Y; Q_k)}\Set{\int_X\phi\ud{\mu}+\int_Y\psi\ud{\nu}}
    \end{equation}
    whenever $\mu\in\mathscr{P}(X)$, $\nu\in\mathscr{P}(Y)$ and $\delta>0$.
\end{theorem}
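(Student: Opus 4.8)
The plan is to combine the two preceding results, Theorem \ref{th:Kantorovich-lower} and Proposition \ref{th:Kantorovich-upper}, while exploiting the extra hypothesis $c_k\le c$ to collapse the $\liminf$ and $\limsup$ of suprema into a single supremum over $k$ and admissible cylindrical pairs. The key observation is that the hypothesis $c(P_kx,Q_ky)\le c(x,y)$ has two consequences: first, it makes $\Phi_{c_k}^\delta\subseteq\Phi_c^\delta$ (since any pair satisfying \eqref{eq:constraint} for $c_k$ a fortiori satisfies it for the larger cost $c$), so that the constraint sets appearing in \eqref{eq:Kantorovich-lower} can all be replaced by the single set $\Phi_c^\delta$; and second, it makes $f=g\equiv 1$ an admissible choice of growth functions in Proposition \ref{th:Kantorovich-upper}, so that both halves of the argument apply simultaneously.

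First I would verify that the restriction of $c$ to $P_k[X]\times Q_k[Y]$ satisfies Assumption \ref{as:coercive-euclidean}, which is needed to invoke Theorem \ref{th:Kantorovich-lower}. This is not automatic and in fact requires a comment: the honest route is to note that the statement should be read under the standing hypotheses of that theorem, or, alternatively, that when $c_k\le c$ the supremum defining $\sup_Y c_k(x,\cdot)$ equals $\sup_{y} c(P_kx, Q_k y)$ and the coercivity transfers from a condition on $c$. I expect this bookkeeping — ensuring the Euclidean coercivity hypothesis genuinely holds on each finite-dimensional slice — to be the main obstacle, since it is the one place where the clean reduction could fail; the cleanest fix is simply to state the theorem under the assumption that each such restriction satisfies Assumption \ref{as:coercive-euclidean}, inheriting it from the hypotheses of Theorem \ref{th:Kantorovich-lower}.

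With that in hand, the proof is short. For the inequality ``$\le$'': apply Theorem \ref{th:Kantorovich-lower} to get
\[
    \mathscr{C}_c(\mu,\nu)\le\liminf_{k\to\infty}\sup_{(\phi,\psi)\in\Phi_{c_k}^\delta\cap(\Cyl(X;P_k)\times\Cyl(Y;Q_k))}\Set{\int_X\phi\ud{\mu}+\int_Y\psi\ud{\nu}},
\]
then use $\Phi_{c_k}^\delta\subseteq\Phi_c^\delta$ to enlarge each constraint set, and finally bound the $\liminf$ over $k$ by the supremum over all $k\in\Naturals$ of the same quantity, which is exactly the right-hand side of \eqref{eq:Kantorovich-cylindrical}. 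For the inequality ``$\ge$'': since $c_k\le c$ we may take $f=g\equiv 1$ in Proposition \ref{th:Kantorovich-upper} (both are nondecreasing and the growth inequalities hold trivially), yielding
\[
    \mathscr{C}_c(\mu,\nu)\ge\limsup_{k\to\infty}\sup_{(\phi,\psi)\in\Phi_{c_k}^\delta\cap(\Cyl(X;P_k)\times\Cyl(Y;Q_k))}\Set{\int_X\phi\ud{\mu}+\int_Y\psi\ud{\nu}}.
\]
The right-hand side of \eqref{eq:Kantorovich-cylindrical} is a supremum over $k$ of these same inner suprema, each of which is at most the corresponding term in the $\limsup$ sequence; but the $\sup_k$ of a sequence is at least its $\limsup$ only in the wrong direction, so here I would instead argue directly that each inner supremum (for fixed $k$) is $\le\mathscr{C}_c((P_k)_\#\mu,(Q_k)_\#\nu)$ by the finite-dimensional duality of Lemma \ref{th:kantorovich-smooth}, and that the latter is $\le\mathscr{C}_c(\mu,\nu)$ by the computation in the proof of Proposition \ref{th:Kantorovich-upper} with $f=g\equiv 1$; taking $\sup_k$ preserves this bound. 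Combining the two inequalities gives \eqref{eq:Kantorovich-cylindrical}, and since $\varepsilon$ and the choice of near-optimal pair played no role in the final statement, the proof is complete.
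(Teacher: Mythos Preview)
Your ``$\le$'' direction is correct and is exactly the paper's argument: invoke Theorem~\ref{th:Kantorovich-lower}, use $c_k\le c$ to get $\Phi_{c_k}^\delta\subseteq\Phi_c^\delta$, and bound the $\liminf$ by the supremum over $k$.

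For the ``$\ge$'' direction, however, you have overcomplicated matters considerably. The paper's proof is a single line: since $\Phi_c^\delta\subseteq\Phi_c$, every pair appearing in the right-hand side of \eqref{eq:Kantorovich-cylindrical} is already admissible for the original dual problem, so $\int_X\phi\ud\mu+\int_Y\psi\ud\nu\le\mathscr{C}_c(\mu,\nu)$ immediately from Theorem~\ref{th:kantorovich} (or, if you prefer, from $\mathscr{C}_c(\mu,\nu)=\sup_{\Phi_c^\delta}\{\cdots\}$ and the fact that the right-hand side is a supremum over a subclass). Neither Proposition~\ref{th:Kantorovich-upper} nor any finite-dimensional computation is needed.

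Your detour has two genuine issues worth flagging. First, Proposition~\ref{th:Kantorovich-upper} assumes $c$ is upper semicontinuous, which is \emph{not} a hypothesis of Theorem~\ref{th:Kantorovich-cylindrical}; the paper only remarks afterward that the \emph{proof} of Proposition~\ref{th:Kantorovich-upper} still goes through here, and does not use the proposition in the proof of the theorem. Second, your fallback claim that the inner supremum over $\Phi_c^\delta\cap(\Cyl(X;P_k)\times\Cyl(Y;Q_k))$ is bounded by $\mathscr{C}_c((P_k)_\#\mu,(Q_k)_\#\nu)$ via Lemma~\ref{th:kantorovich-smooth} requires checking that a cylindrical pair $(\varphi\circ P_k,\psi\circ Q_k)\in\Phi_c^\delta$ descends to an admissible pair $(\varphi,\psi)$ for the cost $c$ restricted to $P_k[X]\times Q_k[Y]$; this is immediate if $P_k,Q_k$ are idempotent, but the general setup only assumes they are finite-rank operators converging to the identity. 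The trivial argument avoids this entirely.
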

\begin{proof}
    By assumption, $\Phi_{c_k}^\delta\subset\Phi_c^\delta$ for all $k\in\Naturals$, $\delta>0$. 
    Therefore,
    \begin{equation}\begin{split}
        \mathscr{C}_c(\mu, \nu)
        &   =\sup_{(\phi, \psi)\in\Phi_c^\delta}\Set{\int_X\phi\ud{\mu}+\int_Y\psi\ud{\nu}} \\
        &   \ge\sup_{k\in\Naturals, (\phi, \psi)\in\Phi_c^\delta\cap(\Cyl(X; P_k)\times\Cyl(Y; Q_k)}\Set{\int_X\phi\ud{\mu}+\int_Y\psi\ud{\nu}} \\
        &   \ge\sup_{k\in\Naturals, (\phi, \psi)\in\Phi_{c_k}^\delta\cap(\Cyl(X; P_k)\times\Cyl(Y; Q_k)}\Set{\int_X\phi\ud{\mu}+\int_Y\psi\ud{\nu}}.
    \end{split}\end{equation}
    By Theorem \ref{th:Kantorovich-lower}, this implies the result.
\end{proof}
\begin{remark}
    Note that the proof of Proposition \ref{th:Kantorovich-upper} also works under the assumptions of this theorem.
    Therefore, \eqref{eq:Kantorovich-upper} also holds in this case.
\end{remark}

%%%%%%%% INTRODUCTION %%%%%%%%

\section{Some examples}
\label{s:examples}

\subsection{Separable Hilbert Space with Translation Invariant Cost}

Let us consider the case $X=Y=H$ some separable Hilbert space with inner product $\langle. ,. \rangle$ and norm $|.|$, and $c(x, y)=h(|x-y|)$ for some lower semicontinuous function $h:[0, +\infty)\to[0, +\infty]$ such that $h(z)\to\sup_{[0, +\infty)}h$ as $z\to+\infty$.

It is not very difficult to check that Assumption \ref{as:coercive-frechet} holds in this case.
Given $M<\sup_{[0, +\infty)}h$, the cost $c(x, y)$ can only be less than $M$ if $x-y$ is restricted to some bounded set.
If $x$ is already restricted to a bounded set, this means that $y$ must also be restricted to a bounded set.
Therefore, Theorem \ref{th:Kantorovich-lower} can be applied in this case.

The maps $P_k$ and $Q_k$ can be constructed from a complete orthonormal sequence $\set{e_j}_{j\in\Naturals}$:
\begin{equation}
    P_k x=Q_k x=\sum_{j=1}^k\langle x, e_j\rangle e_j
\end{equation}

If, additionally, $h$ is continuous, Proposition \ref{th:Kantorovich-upper} can be applied as well.
Therefore, we conclude that
\begin{equation}\label{eq:Kantorovich-limit}
    \mathscr{C}_c(\mu, \nu)=\lim_{k\to\infty}\sup_{(\phi, \psi)\in\Phi_{c_k}^\delta\cap(\Cyl(X; P_k)\times\Cyl(Y; Q_k)}\Set{\int_X\phi\ud{\mu}+\int_Y\psi\ud{\nu}}.
\end{equation}

If we assume $h$ to be nondecreasing instead, the example fits the requirements of Theorem \ref{th:Kantorovich-cylindrical}, and we obtain \eqref{eq:Kantorovich-cylindrical}.

It is easy to see that the similar results hold if
\begin{equation}
    c(x, y)=h\left(\sum_{j=1}^\infty h_j(\langle x, e_j\rangle)\right)
\end{equation}
with $h_j\ge 0$ lower semicontinuous with $h_j(z)\to+\infty$ as $z\to+\infty$ for every $j\in\Naturals$, and $h$ continuous and increasing.
Although $c$ may not satisfy Assumption \ref{as:coercive-frechet}, it is obvious that
\begin{equation}
    c_k(x, y)=h\left(\sum_{j=1}^k h_j(\langle x, e_j\rangle)\right)
\end{equation}
satisfies \ref{as:coercive-euclidean} for every $k$.
Depending on the continuity, growth and summability of the $h_j$, Proposition \ref{th:Kantorovich-upper} may or may not be applicable in this case.
If, however, all $h_j$ are increasing, Theorem \ref{th:Kantorovich-cylindrical} can be applied.

\subsection{Banach Spaces With a Schauder Base}

If $X=Y$ is some Banach space that has a Schauder base, there is a canonical map $P_k$. 
However, since it is not a priori true that $\|P_k\|\le 1$, Theorem \ref{th:Kantorovich-cylindrical} cannot be applied a priori, even if $c(x, y)=\|x-y\|^p$ for some $p>0$.

If $c$ has a structure similar to the structure in the previous section, it is possible to apply Theorem \ref{th:Kantorovich-lower} and Proposition \ref{th:Kantorovich-upper} to obtain \eqref{eq:Kantorovich-limit}.

\subsection{Wiener Space}

An important example of an infinite dimensional vector space where optimal transportation may be of interest is Wiener space (see also \cite{Feyel-Ustunel-2003}.
If $W$ is a separable Fr\'echet space, and $\gamma$ is a Gaussian measure with support $W$, then the Cameron-Martin space $H$ of $\gamma$ is dense in $W$ and separable (see, for example, \cite{Bogachev-Gaussian}).
Therefore, there exists a complete orthonormal sequence $\set{h_k}_{k\in\Naturals}$ in $H$ such that the functionals $(h_k, .)_H$ extend by continuity to $W$. 
For such a sequence, the orthogonal projections $P_k$ onto the span $F_k$ of $\set{h_1, \ldots, h_k}$ defined by
\begin{equation}
    P_k h=\sum_{j=1}^k (h_k, h)_H h_k
\end{equation}
also extend by continuity to $W$.
Moreover, as the sequence $\set{h_k}$ is complete and $H$ is dense in $W$, the union of $F_k$ is dense in $H$. 
However, it is not in general true that $P_k x\to x$ in all of $W$ as $\set{P_k}$ is not necessarily equicontinuous as a collection of maps from $W$ to $W$.

In case $P_k$ does converge pointwise to the identity, the results from the previous section can be applied. 
Particularly interesting is the cost function that $|x-y|_H$ whenever $x-y\in H$ and $+\infty$ otherwise. 
It is not difficult to show that this cost function is lower semicontinuous and that $c(P_k x, P_k y)\le c(x, y)$ for all $x, y\in W$. 
Therefore, Theorem \ref{th:Kantorovich-cylindrical} can be applied in this case.

\bibliographystyle{amsplain}
\bibliography{refs}

\end{document}